\documentclass{amsart}
\usepackage{tabu}
\usepackage{amssymb} 
\usepackage{amsmath} 
\usepackage{amscd}
\usepackage{amsbsy}
\usepackage{comment, enumerate}
\usepackage[matrix,arrow]{xy}
\usepackage{hyperref}
\usepackage{mathrsfs}
\usepackage{color}
\usepackage{mathtools,caption}


\newcommand{\QQ}{\mathbb Q}
\newcommand{\F}{\mathbb F}
\newcommand{\ZZ}{\mathbb Z}
\newcommand{\OK}{\mathcal O_K}

\newcommand{\fp}{\mathfrak p}
\newcommand{\fq}{\mathfrak q}

\newcommand{\tu}{\tilde u}
\newcommand{\bound}{5,000}

\newenvironment{changemargin}[2]{%
\begin{list}{}{%
\setlength{\topsep}{0pt}%
\setlength{\leftmargin}{#1}%
\setlength{\rightmargin}{#2}%
\setlength{\listparindent}{\parindent}%
\setlength{\itemindent}{\parindent}%
\setlength{\parsep}{\parskip}%
}%
\item[]}{\end{list}}

\newcommand{\fz}{\mathfrak z}

\newcommand{\OL}{\mathcal O_L}

\newtheorem{thm}{Theorem}
\newtheorem{lem}{Lemma}[section]

\theoremstyle{definition}

\theoremstyle{remark}

\begin{document}

\title[]{Perfect powers that are sums of squares in a three term arithmetic progression}

\author{Angelos Koutsianas}
\address{Department of Informatics, University of Piraeus, Pireas, Greece}
\email{akoutsianas@webmail.unipi.gr}

\author{Vandita Patel}
\address{Department of Mathematics, University of Toronto, Bahen Centre, 40 St. George St., Room 6290, Toronto, Ontario, Canada, M5S 2E4}
\email{vandita@math.utoronto.ca}

\date{\today}

\keywords{Exponential equation, Lehmer sequences, primitive divisors}
\subjclass[2010]{Primary 11D61}

\begin{abstract}
We determine primitive solutions
to the equation $(x-r)^2 + x^2 + (x+r)^2 = y^n$ for $1 \le r \le \bound$, making
use of a factorization argument and the Primitive Divisors Theorem 
due to Bilu, Hanrot and Voutier.  
\end{abstract}

\maketitle

\section{Introduction} \label{intro}

Perfect powers that are sums of powers of consecutive terms 
in an arithmetic progression 
have attracted considerable attention.  
For example, Dickson's \emph{History of the Theory of Numbers} [Volume II,
582--588] \cite{Dickson} surveys the contributions of several prominent
mathematicians (including Cunningham, Catalan, Genocchi and Lucas) during the
early 19th and 20th century towards solving specific
cases of the Diophantine equation
\begin{equation}\label{eq:arithmetic_progressions}
x^k + (x + r)^k + \cdots +(x + (d-1)r)^k=y^n \quad x,y,d,k,r,n\in\ZZ, \; n\geq 2.
\end{equation}
This is still a remarkably active field, with recent results due
to
\cite{BaiZhang13}, \cite{Zhang14}, \cite{Hajdu15}, \cite{BennettPatelSiksek16}, \cite{BennettPatelSiksek17}, \cite{Patel17}, \cite{Soydan17}, \cite{BerczesPinkSavasSoydan18}, \cite{PatelSiksek}, \cite{Zhang17}, \cite{Koutsianas17b} and \cite{ArgaezPatel}.

In this paper, we consider the case $d=3$ and $k=2$, namely the equation
\begin{equation} \label{eq:main}
(x-r)^2 + x^2 + (x+r)^2 = y^n, \quad x,y,r,n \in \ZZ,\; n \ge 2.
\end{equation}
In  \cite{Koutsianas17b}, the first author studies equation~\eqref{eq:main}
where $r$ is of the form $p^b$ with $p$  a suitable prime. In this paper, we
completely solve \eqref{eq:main} for all values of $1\leq r\leq \bound$, under the
natural assumption $\gcd(x,y)=1$, using the characterization of primitive
divisors in Lehmer sequences due to Bilu, Hanrot and Voutier \cite{BiluHanrotVoutier01}.

An integer solution $(x,y)$ of \eqref{eq:main} is said to be
 \textit{primitive} if $\gcd(x,y)=1$. This is equivalent to $x,y,r$ being
pairwise coprime. A solution where $xy=0$ is called a \textit{trivial
solution}.

\begin{thm}\label{thm:main_gcd_one}
Let $1\leq r \leq \bound$. All non-trivial primitive solutions to equation \eqref{eq:main} 
with prime exponent $n$ are given in Table \ref{table:solutions_gcd_one}.
\end{thm}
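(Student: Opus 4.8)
The plan is to reduce \eqref{eq:main} to a single generalized Fermat-type equation amenable to the primitive divisor machinery. First I would expand the left-hand side to obtain the equivalent equation $3x^2 + 2r^2 = y^n$. Reducing modulo $3$ and using that a primitive solution has $x,y,r$ pairwise coprime, one sees that $3 \nmid r$ (otherwise $3 \mid 3x^2 + 2r^2 = y^n$, forcing $3\mid y$ and violating $\gcd(y,r)=1$). Hence $2r^2 \equiv 2 \pmod 3$ and $y^n \equiv 2 \equiv -1 \pmod 3$; since $3 \nmid y$ this forces $y \equiv -1 \pmod 3$ and $n$ odd. Thus it suffices to treat odd prime exponents $n$, and the case $n=2$ is excluded outright.

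Next I would factor over the imaginary quadratic field $K = \QQ(\sqrt{-6})$, whose ring of integers is $\OK = \ZZ[\sqrt{-6}]$, with unit group $\{\pm 1\}$ and class number $2$. Multiplying by $3$ gives $(3x+r\sqrt{-6})(3x-r\sqrt{-6}) = 3y^n$. The ramified primes above $2$ and $3$ are $\mathfrak{p}_2 = (2,\sqrt{-6})$ and $\mathfrak{p}_3 = (3,\sqrt{-6})$, with $(\sqrt{-6}) = \mathfrak{p}_2\mathfrak{p}_3$ and $(3) = \mathfrak{p}_3^2$. Using the pairwise coprimality of $x,y,r$, any common ideal divisor of the two conjugate factors must be supported on $\mathfrak{p}_2$ and $\mathfrak{p}_3$; a short analysis of the $2$- and $3$-adic valuations of $x$ and $y$ then pins these contributions down exactly, yielding an ideal equation $(3x + r\sqrt{-6}) = \mathfrak{p}_2^{d}\mathfrak{p}_3^{c}\,\mathfrak{a}^n$ with bounded $c,d$ and $\mathfrak{a}$ coprime to $6$.

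Since $n$ is an odd prime coprime to the class number $2$, the relation $\mathfrak{a}^n$ principal forces $\mathfrak{a}$ itself principal; absorbing the ramified factors and the unit $\pm 1$ yields an element equation $3x + r\sqrt{-6} = \lambda\,\gamma^n$, where $\gamma = a + b\sqrt{-6} \in \OK$ and $\lambda$ ranges over an explicit finite set. For the self-conjugate values $\lambda \in \ZZ$, subtracting the conjugate equation gives $2r\sqrt{-6} = \lambda(\gamma^n - \bar\gamma^n)$; dividing by $\gamma - \bar\gamma = 2b\sqrt{-6}$ shows that $\tu_n := (\gamma^n - \bar\gamma^n)/(\gamma - \bar\gamma) = \pm\,r/(\lambda b)$, so $(\gamma,\bar\gamma)$ is a Lehmer pair whose $n$-th term $\tu_n$ divides $r$. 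A naive size estimate is not decisive here, since $\gamma,\bar\gamma$ are complex conjugates and $|\gamma^n - \bar\gamma^n|$ can suffer near-cancellation; this is exactly why I would invoke the Primitive Divisors Theorem of Bilu, Hanrot and Voutier \cite{BiluHanrotVoutier01}. For $n > 30$ the number $\tu_n$ has a primitive divisor $p$; as $\tu_n \mid r$ we get $p \mid r$, and primitivity forces $p \equiv \pm 1 \pmod n$, whence $n \mid p^2 - 1$ with $p \le \bound$. This leaves only a handful of candidate exponents for each $r$. The finitely many exceptional Lehmer pairs with $7 \le n \le 30$ are read off from the tables of \cite{BiluHanrotVoutier01}, and the residual small exponents $n = 3,5$ reduce to finitely many Thue equations in $a,b$. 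Running through all $r \le \bound$ and the bounded list of $(\lambda,n)$ then recovers exactly the solutions of Table~\ref{table:solutions_gcd_one}.

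The main obstacle is the bookkeeping in the middle two steps: correctly determining the $\mathfrak{p}_2$- and $\mathfrak{p}_3$-adic valuations of $3x + r\sqrt{-6}$ so that the finite list of admissible $\lambda$ is complete, and ensuring that the descent produces a genuine, non-degenerate Lehmer pair (with $\gamma\bar\gamma$ and $(\gamma+\bar\gamma)^2$ appropriately coprime) so that the classification of \cite{BiluHanrotVoutier01} applies. The anti-self-conjugate values $\lambda \in \ZZ\sqrt{-6}$, where no clean Lehmer number appears but instead the companion Lucas-type relation $\gamma^n + \bar\gamma^n$, and the explicit resolution of the Thue equations for $n=3,5$, are the places where the argument is most delicate and most computational.
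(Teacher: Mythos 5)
Your outline follows the paper's strategy in broad strokes (rewrite as $3x^2+2r^2=y^n$, factor over $K=\QQ(\sqrt{-6})$, descend to a Lehmer sequence, apply Bilu--Hanrot--Voutier, finish computationally), but the two middle steps you defer as ``bookkeeping'' are exactly where your argument breaks. The ideal factorization is $(3x+r\sqrt{-6})\OK=\fp_3\,\fa^n$ with $\fa$ coprime to $6$ (your $\fp_2$ never enters, since the norm $3y^n$ is odd). Your next claim, that ``$\fa^n$ principal forces $\fa$ principal,'' has a false premise and a false conclusion: $\fp_3$ is \emph{not} principal, and since the class group is $\ZZ/2\ZZ$ while $n$ is odd, $[\fa]=[\fa]^n=[\fp_3]$ is nontrivial, so neither $\fa^n$ nor $\fa$ is principal. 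Consequently no equation $3x+r\sqrt{-6}=\lambda\gamma^n$ with $\lambda,\gamma\in\OK$ and $(\gamma)=\fa$ can hold; taken literally, your setup produces a class-group contradiction and would ``prove'' that Table~\ref{table:solutions_gcd_one} is empty. The paper's way out is that $\fp_3\fa$ \emph{is} principal: writing $\fp_3\fa=(\gamma)$ and $\fp_3^{1-n}=(3^{(1-n)/2})$ gives $3x+r\sqrt{-6}=\pm\gamma^n/3^{(n-1)/2}$, so the correct ``$\lambda$'' is a $3$-power denominator depending on $n$, not an element of a fixed finite subset of $\OK$.

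This error propagates into your Lehmer-pair step. With $\gamma=u+v\sqrt{-6}$ a generator of $\fp_3\fa$ one has $3\mid u$ and $\gamma\bar\gamma=3y$, so $3$ divides both $(\gamma+\bar\gamma)^2=4u^2$ and $\gamma\bar\gamma$: the pair $(\gamma,\bar\gamma)$ violates the coprimality condition in the definition of a Lehmer pair, and BHV cannot be applied to it. The missing idea --- Lemma~\ref{lem:lehmer} of the paper --- is to leave $\OK$ and pass to the quartic field $L=\QQ(\sqrt{-2},\sqrt{3})$, taking $\alpha=\gamma/\sqrt{3}$ and $\beta=\bar\gamma/\sqrt{3}$; then $\alpha\beta=y$ and $(\alpha+\beta)^2=4u^2/3\in\ZZ$, and these are coprime precisely because the solution is primitive. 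You flag this coprimality as a ``delicate point,'' but inside your framework (with $\lambda\in\ZZ$ and $\gamma$ generating $\fa$) it is not delicate, it is false. The remainder of your plan is sound and essentially the paper's: a primitive divisor $q$ of $\tu_n=r/v$ (with $v\mid r$) divides $r$ and satisfies $n\mid q^2-1$, which is the content of Lemma~\ref{lem:B}, and a finite search over $r$, $v$ and the admissible primes $n$ finishes. Two smaller remarks: since $n$ is prime you may invoke the BHV theorem for prime indices $n>7$, which removes both your $n>30$ threshold and the $7\le n\le 30$ table look-up; and once the element equation is pinned down correctly, the ``anti-self-conjugate $\lambda$'' case you worry about never arises.
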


\begin{changemargin}{-1.5cm}{-1.5cm}
\begin{center}
\begin{tabular}{| c | c | }
\hline
    $r$ &  $(|x|,y,n)$   \\ \hline
$ 2 $ & $ ( 21 , 11 , 3 ) $ \\ \hline
$ 7 $ & $ ( 3 , 5 , 3 ) $ \\ \hline
$ 11 $ & $ ( 31 , 5 , 5 ) $ \\ \hline
$ 70 $ & $ ( 862389 , 13067 , 3 ) $ \\ \hline
$ 79 $ & $ ( 63 , 29 , 3 ) $ \\ \hline
$ 92 $ & $ ( 93 , 35 , 3 ) $ \\ \hline
$ 119 $ & $ ( 801 , 125 , 3 ) $ \\ \hline
$ 133 $ & $ ( 17307 , 965 , 3 ) $ \\ \hline
$ 146 $ & $ ( 9 , 35 , 3 ) $ \\ \hline
$ 155 $ & $ ( 369 , 77 , 3 ) $ \\ \hline
$ 187 $ & $ ( 3255 , 317 , 3 ) $ \\ \hline
$ 196 $ & $ ( 207 , 59 , 3 ) $ \\ \hline
$ 197 $ & $ ( 13 , 5 , 7 ) $ \\ \hline
$ 205 $ & $ ( 147 , 53 , 3 ) $ \\ \hline
$ 223 $ & $ ( 345 , 77 , 3 ) $ \\ \hline
$ 262 $ & $ ( 89 , 11 , 5 ) $ \\ \hline
$ 371 $ & $ ( 374475 , 7493 , 3 ) $ \\ \hline
$ 376 $ & $ ( 1071 , 155 , 3 ) $ \\ \hline
$ 434 $ & $ ( 255 , 83 , 3 ) $ \\ \hline
$ 436 $ & $ ( 4169 , 35 , 5 ) $ \\ \hline
$ 439 $ & $ ( 987 , 149 , 3 ) $ \\ \hline
$ 623 $ & $ ( 291 , 101 , 3 ) $ \\ \hline
$ 713 $ & $ ( 30921 , 1421 , 3 ) $ \\ \hline
$ 727 $ & $ ( 2133 , 245 , 3 ) $ \\ \hline
$ 736 $ & $ ( 82035 , 2723 , 3 ) $ \\ \hline
$ 772 $ & $ ( 105 , 107 , 3 ) $ \\ \hline
$ 776 $ & $ ( 1545 , 203 , 3 ) $ \\ \hline
$ 866 $ & $ ( 861 , 155 , 3 ) $ \\ \hline
$ 889 $ & $ ( 1095 , 173 , 3 ) $ \\ \hline
\end{tabular}
\begin{tabular}{| c | c | }
\hline
    $r$ &  $(|x|,y,n)$   \\ \hline
$ 952 $ & $ ( 381 , 131 , 3 ) $ \\ \hline
$ 1087 $ & $ ( 3927 , 365 , 3 ) $ \\ \hline
$ 1136 $ & $ ( 9723 , 659 , 3 ) $ \\ \hline
$ 1190 $ & $ ( 1719 , 227 , 3 ) $ \\ \hline
$ 1316 $ & $ ( 54561 , 2075 , 3 ) $ \\ \hline
$ 1339 $ & $ ( 6069 , 485 , 3 ) $ \\ \hline
$ 1420 $ & $ ( 19413 , 1043 , 3 ) $ \\ \hline
$ 1442 $ & $ ( 1971 , 251 , 3 ) $ \\ \hline
$ 1469 $ & $ ( 4695 , 413 , 3 ) $ \\ \hline
$ 1519 $ & $ ( 6513 , 509 , 3 ) $ \\ \hline
$ 1636 $ & $ ( 357 , 179 , 3 ) $ \\ \hline
$ 1771 $ & $ ( 2097 , 269 , 3 ) $ \\ \hline
$ 1910 $ & $ ( 597 , 203 , 3 ) $ \\ \hline
$ 1955 $ & $ ( 21 , 197 , 3 ) $ \\ \hline
$ 1960 $ & $ ( 161823 , 4283 , 3 ) $ \\ \hline
$ 2009 $ & $ ( 294837 , 6389 , 3 ) $ \\ \hline
$ 2023 $ & $ ( 10035 , 677 , 3 ) $ \\ \hline
$ 2162 $ & $ ( 3729 , 371 , 3 ) $ \\ \hline
$ 2189 $ & $ ( 4053 , 389 , 3 ) $ \\ \hline
$ 2329 $ & $ ( 11109 , 725 , 3 ) $ \\ \hline
$ 2338 $ & $ ( 5505 , 467 , 3 ) $ \\ \hline
$ 2378 $ & $ ( 1651 , 11 , 7 ) $ \\ \hline
$ 2378 $ & $ ( 33808666101 , 15079691 , 3 ) $ \\ \hline
$ 2392 $ & $ ( 2826957 , 28835 , 3 ) $ \\ \hline
$ 2410 $ & $ ( 3171 , 347 , 3 ) $ \\ \hline
$ 2504 $ & $ ( 1659 , 275 , 3 ) $ \\ \hline
$ 2563 $ & $ ( 723 , 245 , 3 ) $ \\ \hline
$ 2567 $ & $ ( 13419 , 821 , 3 ) $ \\ \hline
$ 2599 $ & $ ( 14637 , 869 , 3 ) $ \\ \hline
\end{tabular}
\begin{tabular}{| c | c | }
\hline
    $r$ &  $(|x|,y,n)$   \\ \hline
$ 2788 $ & $ ( 1323 , 275 , 3 ) $ \\ \hline
$ 3026 $ & $ ( 6279 , 515 , 3 ) $ \\ \hline
$ 3098 $ & $ ( 3333 , 35 , 5 ) $ \\ \hline
$ 3109 $ & $ ( 627 , 29 , 5 ) $ \\ \hline
$ 3193 $ & $ ( 76365 , 2597 , 3 ) $ \\ \hline
$ 3247 $ & $ ( 20463 , 1085 , 3 ) $ \\ \hline
$ 3341 $ & $ ( 2961 , 365 , 3 ) $ \\ \hline
$ 3395 $ & $ ( 837 , 293 , 3 ) $ \\ \hline
$ 3472 $ & $ ( 11637 , 755 , 3 ) $ \\ \hline
$ 3859 $ & $ ( 29406489 , 137405 , 3 ) $ \\ \hline
$ 3967 $ & $ ( 27657 , 1325 , 3 ) $ \\ \hline
$ 4025 $ & $ ( 37257 , 1613 , 3 ) $ \\ \hline
$ 4034 $ & $ ( 9765 , 683 , 3 ) $ \\ \hline
$ 4228 $ & $ ( 2937 , 395 , 3 ) $ \\ \hline
$ 4268 $ & $ ( 216153 , 5195 , 3 ) $ \\ \hline
$ 4277 $ & $ ( 1011 , 341 , 3 ) $ \\ \hline
$ 4354 $ & $ ( 3447 , 419 , 3 ) $ \\ \hline
$ 4417 $ & $ ( 459 , 341 , 3 ) $ \\ \hline
$ 4529 $ & $ ( 680936595 , 1116293 , 3 ) $ \\ \hline
$ 4592 $ & $ ( 7305 , 587 , 3 ) $ \\ \hline
$ 4633 $ & $ ( 171057 , 4445 , 3 ) $ \\ \hline
$ 4669 $ & $ ( 59007 , 2189 , 3 ) $ \\ \hline
$ 4687 $ & $ ( 1277 , 5 , 11 ) $ \\ \hline
$ 4712 $ & $ ( 1530639 , 371 , 5 ) $ \\ \hline
$ 4718 $ & $ ( 8397 , 635 , 3 ) $ \\ \hline
$ 4759 $ & $ ( 36363 , 1589 , 3 ) $ \\ \hline
$ 4808 $ & $ ( 1269 , 371 , 3 ) $ \\ \hline
$ 4961 $ & $ ( 4451643 , 39029 , 3 ) $ \\ \hline
 		 & 								\\ \hline
\end{tabular}
\captionof{table}{Triples of non-trivial primitive solutions $(|x|,y,n)$ 
\\
of \eqref{eq:main} for the values of $1 \leq r\leq \bound$.}
\label{table:solutions_gcd_one}
\end{center}
\end{changemargin}

{\bf Remark:}
Non-trivial primitive solutions to equation \eqref{eq:main}, with $1\leq r\leq \bound$  
and exponent $n$ that is composite can also be recovered from Table \ref{table:solutions_gcd_one}. This can be done by simply checking whether $y$ is a perfect power.

\section{Prime divisors of Lehmer sequences}
A \textit{Lehmer pair} is a pair $\alpha,\beta$ of algebraic integers such that $(\alpha +\beta)^2$ and $\alpha\beta$ are non--zero coprime rational integers and $\alpha/\beta$ is not a root of unity. The \textit{Lehmer sequence} 
associated to the Lehmer pair $(\alpha,\beta)$ is
\begin{equation}
\tu_n=\tu_n(\alpha,\beta)=\begin{cases}
\frac{\alpha^n-\beta^n}{\alpha-\beta} & n\text{ odd},\\
\frac{\alpha^n-\beta^n}{\alpha^2-\beta^2} & n\text{ even}.
\end{cases}
\end{equation}
A prime $p$ is called a \textit{primitive divisor} of $\tu_n$ if
it divides $\tu_n$ but does not divide
$(\alpha^2-\beta^2)^2\cdot\tu_1\cdots\tu_{n-1}$. 
We shall make use of the following celebrated theorem~\cite{BiluHanrotVoutier01}.
\begin{thm}[Bilu, Hanrot and Voutier]\label{thm:non_defective}
Let $\alpha$, $\beta$ be a Lehmer pair. Then
$\tu_n(\alpha,\beta)$ has a primitive divisor for all $n>30$,
and for all prime $n>7$.
\end{thm}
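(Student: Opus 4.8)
The plan is to split the statement into two regimes attacked by completely different tools: a \emph{large-$n$} regime controlled by transcendence theory, and a \emph{small-$n$} regime resolved by explicit Diophantine computation. Call a Lehmer pair $n$-\emph{defective} if $\tu_n(\alpha,\beta)$ has no primitive divisor; the theorem asserts that no $n$-defective pair exists for $n>30$, nor for prime $n>7$.

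First I would isolate the arithmetically essential factor of $\tu_n$. With $\zeta$ a primitive $n$-th root of unity, put
\[
\Phi_n^*(\alpha,\beta)=\prod_{\substack{1\le k\le n\\ \gcd(k,n)=1}}\bigl(\alpha-\zeta^{k}\beta\bigr),
\]
the homogenisation of the $n$-th cyclotomic polynomial. This is a rational integer dividing $\tu_n$, and up to a factor supported on the primes dividing $n$ one has $\tu_n=\prod_{d\mid n}\Phi_d^*$ (over $d>1$, or $d>2$ in the even case). The standard lemma on cyclotomic factors says that a prime $p\mid\Phi_n^*(\alpha,\beta)$ which is \emph{not} a primitive divisor must divide $n$ and occurs to bounded multiplicity. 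Hence $n$-defectiveness forces $\Phi_n^*(\alpha,\beta)$ to be supported on the primes dividing $n$, so that $|\Phi_n^*(\alpha,\beta)|\le c\,n$ for an explicit constant $c$.

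Against this upper bound I would place an analytic lower bound. Factoring out $\alpha$ (normalised so $|\alpha|\ge|\beta|$, whence $|\alpha|$ is bounded away from $1$ by an absolute constant),
\[
|\Phi_n^*(\alpha,\beta)|=|\alpha|^{\varphi(n)}\prod_{\gcd(k,n)=1}\bigl|1-\zeta^{k}\beta/\alpha\bigr|,
\]
and the delicate situation is $\alpha,\beta$ complex conjugate with $|\beta/\alpha|=1$, where the product can be very small. Writing $\beta/\alpha=e^{i\theta}$ with $\theta$ an irrational multiple of $\pi$, this product is $|\Phi_n(e^{i\theta})|$, whose smallness measures the distance from $n\theta$ to $2\pi\ZZ$, i.e. the size of the linear form $\Lambda=n\log(\beta/\alpha)-2\pi i\,m$ in the two logarithms $\log(\beta/\alpha)$ and $2\pi i$. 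A sharp lower bound of the shape $-\log|\Phi_n(e^{i\theta})|\le C(\log n)^2$, coming from the theory of linear forms in two logarithms, then yields $\log|\Phi_n^*(\alpha,\beta)|\ge\varphi(n)\log|\alpha|-C(\log n)^2$, which against $|\Phi_n^*(\alpha,\beta)|\le c\,n$ bounds $n$ by an explicit $N_0$.

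The main obstacle is quantitative: the constant $C$ must be good enough to push $N_0$ all the way down to $30$ --- with the merely effective but astronomically large constants of the earlier work of Schinzel and Stewart one would be left with a hopeless range of $n$ --- so the argument hinges on the sharpest available two-logarithm estimates and a careful optimisation of their auxiliary parameters. Once $n$ is confined to a finite range, it remains to classify all $n$-defective pairs for each small $n$. Here, after substituting $P=(\alpha+\beta)^2$ and $Q=\alpha\beta$, the condition that $\Phi_n^*(\alpha,\beta)$ be supported on the primes dividing $n$ becomes, for each fixed $n$, a finite collection of Thue equations whose degree grows with $\varphi(n)$ in the integer parameters $P,Q$, which I would solve explicitly (leaning on the existing tabulations of defective pairs for small indices). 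The outcome is that the sporadic defective pairs all occur at a short list of indices $\le30$, none of which is a prime exceeding $7$, and that the list is empty beyond $30$, which is precisely the theorem. I expect this residual small-$n$ casework, rather than any single clean estimate, to be where most of the genuine difficulty and bookkeeping resides.
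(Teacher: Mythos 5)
First, a point of order: the paper does not prove this theorem at all --- it is imported wholesale (as a ``celebrated theorem'') from the Bilu--Hanrot--Voutier paper, so there is no internal proof to compare yours against. Judged on its own terms, your outline does reconstruct the genuine architecture of the BHV argument: the cyclotomic factor $\Phi_n^*(\alpha,\beta)$, the classical lemma that a non-primitive prime divisor of $\Phi_n^*$ must divide $n$ (so defectiveness forces $|\Phi_n^*(\alpha,\beta)|$ to be at most roughly $n$), a lower bound in the complex-conjugate case coming from linear forms in two logarithms, and a terminal finite classification. But what you have written is a roadmap, not a proof: the two load-bearing components --- a two-logarithm estimate sharp enough to reach $n\le 30$ (this is the heart of BHV, requiring the Laurent--Mignotte--Nesterenko method with delicately optimised parameters, not an off-the-shelf bound), and the complete determination of defective pairs for each $n\le 30$ --- are both deferred to ``the sharpest available estimates'' and ``existing tabulations''. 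Deferring exactly the two hard steps means neither regime is actually proved.

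Second, and more seriously, the advertised outcome of your small-$n$ casework is false, and so is the statement as the paper quotes it. Take $\alpha=(1+\sqrt{-7})/2$, $\beta=(1-\sqrt{-7})/2$. Then $(\alpha+\beta)^2=1$ and $\alpha\beta=2$ are non-zero coprime rational integers, and $\alpha/\beta$ is not a root of unity (it satisfies $2x^2+3x+2=0$, so it is not even an algebraic integer); hence $(\alpha,\beta)$ is a Lehmer pair. Its Lehmer sequence is $1,1,-1,-3,-1,5,7,-3,-17,-11,23,45,-1,\dotsc$, so that $\tu_{13}=-1$, which has no prime divisor at all and in particular no primitive divisor: this pair is $13$-defective. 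The example (it encodes the Ramanujan--Nagell equation) appears in Voutier's tables and in BHV itself; the correct clean statement for primes is ``for all prime $n>13$'', with $n=13$ admitting exactly this one exceptional pair up to equivalence. Consequently no amount of work can complete your program as stated --- a faithful execution of it would uncover the counterexample rather than prove the claim of no defective pairs at primes exceeding $7$. (For the paper this misquotation is repairable but leaves a gap in Lemma~\ref{lem:B}, where $7$ is used as the floor for $B$: one must additionally observe that the Lehmer pairs arising there have $(\alpha+\beta)^2=4u^2/3\ge 12$, hence are never equivalent to the exceptional pair, before the primitive-divisor conclusion can be invoked for primes $7<n\le 13$.)
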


\section{Proof of Theorem \ref{thm:main_gcd_one}}
We can rewrite \eqref{eq:main} as  
\begin{equation}\label{eq:main_simply}
3x^2+2r^2=y^n.
\end{equation}
Suppose $\gcd(x,y)=1$; this implies that $x$, $y$, $r$
are pairwise coprime.
Note that $n\ne 2$ as $2$ is a quadratic non-residue modulo $3$.
We shall henceforth suppose that $n$ is an odd prime. 
We rewrite  \eqref{eq:main_simply} as
\begin{equation}\label{eq:3squares}
(3x)^2 + 6r^2 = 3y^n,
\end{equation}
Let $K = \QQ(\sqrt{-6})$ and write $\OK=\ZZ[\sqrt{-6}]$ for its ring of integers.
This has class group isomorphic to $\ZZ/2\ZZ$. We factorize the left-hand side
of equation~\eqref{eq:3squares} as
$$
(3x + r\sqrt{-6})(3x - r\sqrt{-6}) = 3y^n.
$$
It follows that
\begin{equation}\label{eq:ideal_factorization}
(3x + r\sqrt{-6})\OK = \fp_3 \cdot \fz^n
\end{equation}
where $\fp_3$ is the unique prime of $\OK$ above $3$ and $\fz$ is an ideal of $\OK$. The ideal $\fp_3$ is not principal, thus $\fz$ is not either, and $\fp_3^2=(3)$. We write
$$
(3x + r\sqrt{-6})\OK = \fp_3^{1-n} \cdot (\fp_3 \fz)^n
=(3^{(1-n)/2})(\fp_3 \fz)^n.
$$
It follows that $\fp_3 \fz$ is a principal ideal. Write $\fp_3\fz=(\gamma) \OK$ where $\gamma=u+v\sqrt{-6} \in \OK$ with $u,v\in\ZZ$. After possibly changing the sign of $\gamma$ we obtain,
\begin{equation}\label{eq:x_r_gamma}
3x+r\sqrt{-6}=\frac{\gamma^n}{3^{(n-1)/2}}.
\end{equation}
Subtracting the conjugate equation from this equation, we obtain 
\begin{equation}\label{eqn:thue}
\frac{\gamma^n}{3^{(n-1)/2}} - \frac{\bar{\gamma}^n}{3^{(n-1)/2}} = 2\cdot r\sqrt{-6},
\end{equation}
or equivalently,
\begin{equation}\label{eq:Lehmer_sequence}
\frac{\gamma^n}{3^{n/2}} - \frac{\bar{\gamma}^n}{3^{n/2}} = 2\cdot r\sqrt{-2}.
\end{equation}

Let $L = \QQ(\sqrt{-6}, \sqrt{3}) =  \QQ(\sqrt{-2}, \sqrt{3}) $. Write $\OL$ for the ring of integers of $L$ and let
$$
\alpha = \frac{\gamma}{\sqrt{3}} \qquad \text{and} \qquad 
\beta = \frac{\bar{\gamma}}{\sqrt{3}}.
$$

\begin{lem}\label{lem:lehmer}
Let $\alpha, \beta$ be as above. Then, $\alpha$ and $\beta$ are algebraic integers. Moreover, $(\alpha+\beta)^2$ and $\alpha\beta$ are non--zero coprime rational integers and $\alpha/\beta$ is not a unit. 
\end{lem}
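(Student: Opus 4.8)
The plan is to compute the two symmetric functions $\alpha\beta$ and $(\alpha+\beta)^2$ explicitly in terms of $u,v$, deduce integrality of $\alpha,\beta$ from them, and then settle the coprimality and root-of-unity conditions using that $x,y,r$ are pairwise coprime. First I would take norms from $K$ to $\QQ$ in \eqref{eq:x_r_gamma}. Since $N_{K/\QQ}(3x+r\sqrt{-6})=9x^2+6r^2=3y^n$ while $N_{K/\QQ}(\gamma^n/3^{(n-1)/2})=N_{K/\QQ}(\gamma)^n/3^{n-1}$, this yields $N_{K/\QQ}(\gamma)^n=(3y)^n$, hence
\[
N_{K/\QQ}(\gamma)=u^2+6v^2=3y .
\]
Consequently $\alpha\beta=\gamma\bar\gamma/3=N_{K/\QQ}(\gamma)/3=y\in\ZZ$, which is nonzero since the right-hand side of \eqref{eq:main_simply} is positive. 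The same identity forces $3\mid u^2$, so $3\mid u$; writing $u=3u'$ gives the relation $y=3u'^2+2v^2$, which I will reuse.

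Next I would record that $\alpha+\beta=(\gamma+\bar\gamma)/\sqrt3=2u/\sqrt3=2u'\sqrt3$, so that $(\alpha+\beta)^2=12u'^2\in\ZZ$. For a non-trivial solution this is nonzero: if $u=0$ then $\gamma=v\sqrt{-6}$ is purely imaginary, $\gamma^n$ is purely imaginary for odd $n$, and \eqref{eq:x_r_gamma} forces $x=0$. Since $\alpha$ and $\beta$ are the two roots of the monic polynomial $T^2-2u'\sqrt3\,T+y$, whose coefficients $2u'\sqrt3$ and $y$ are algebraic integers of $\OL$, both $\alpha$ and $\beta$ are themselves algebraic integers.

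The step I expect to be the main obstacle is the coprimality $\gcd\bigl((\alpha+\beta)^2,\alpha\beta\bigr)=\gcd(12u'^2,y)=1$. Reducing \eqref{eq:main_simply} modulo $2$ and modulo $3$ and using that $x,y,r$ are pairwise coprime shows that $y$ is coprime to $6$, hence to $12$, so it remains to prove $\gcd(y,u')=1$. Suppose a prime $p$ divides both; then $p\neq 2,3$, and from $y=3u'^2+2v^2$ we get $p\mid 2v^2$, so $p\mid v$ as well. Thus $p\mid u$ and $p\mid v$, i.e. $p\mid\gamma$ in $\OK$, whence $p^n\mid\gamma^n$; feeding this into \eqref{eq:x_r_gamma} and clearing the power of $3$ (coprime to $p$) gives $p\mid x$ and $p\mid r$, contradicting $\gcd(x,r)=1$. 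Hence no such $p$ exists and the two quantities are coprime.

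Finally, for the last assertion I would observe that $\alpha/\beta=\gamma/\bar\gamma$ has absolute value $1$ under every embedding of $L$ into $\mathbb{C}$, because complex conjugation interchanges $\gamma$ and $\bar\gamma$ and therefore sends $\alpha/\beta$ to its inverse. If $\alpha/\beta$ were a unit it would then be an algebraic integer all of whose conjugates lie on the unit circle, so by Kronecker's theorem it would be a root of unity; but the only roots of unity in $L=\QQ(\sqrt{-2},\sqrt3)$ are $\pm1$, since neither $\QQ(i)$ nor $\QQ(\sqrt{-3})$ is a subfield of $L$. The case $\gamma/\bar\gamma=1$ forces $v=0$ and hence $r=0$ via \eqref{eq:x_r_gamma}, while $\gamma/\bar\gamma=-1$ forces $u=0$ and hence $x=0$; both contradict non-triviality (recall $r\ge 1$). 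Therefore $\alpha/\beta$ is not a unit, and in particular not a root of unity, so $(\alpha,\beta)$ is a Lehmer pair.
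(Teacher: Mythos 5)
Your proof is correct, and it reaches the three required facts by noticeably different means than the paper. Your organizing step is to take norms in \eqref{eq:x_r_gamma}, getting $u^2+6v^2=3y$; this single identity simultaneously yields $3\mid u$, the clean formulas $\alpha\beta=y$ and $(\alpha+\beta)^2=12u'^2$, and the relation $y=3u'^2+2v^2$ that drives your coprimality argument. The paper instead deduces $3\mid u$ ideal-theoretically (from $\fp_3\mid\gamma$ and $\fp_3\mid\sqrt{-6}$) and never identifies $\alpha\beta$ with $y$ explicitly. For coprimality, the paper supposes a prime $\fq$ of $\OL$ divides both quantities, concludes $\fq\mid\alpha$ and $\fq\mid\beta$, and reads off from \eqref{eq:x_r_gamma} and \eqref{eq:Lehmer_sequence} that $\fq$ divides $(y)\OL$ and $(2r\sqrt{-2})\OL$, contradicting primitivity; your version stays entirely inside $\ZZ$, using $y=3u'^2+2v^2$ to force a common prime $p$ to divide $u$ and $v$, hence $\gamma$, hence $x$ and $r$ via \eqref{eq:x_r_gamma} --- equally valid, and more self-contained. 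For the final condition, the paper notes $\gamma/\bar{\gamma}\in K$ and uses that the only roots of unity in $K$ are $\pm1$, which directly gives ``not a root of unity''; you instead invoke Kronecker's theorem inside $L$, which proves the literally stronger assertion in the lemma's statement (``not a unit''). One caveat there: your justification that $\pm1$ are the only roots of unity in $L$ (via $\QQ(i),\QQ(\sqrt{-3})\not\subset L$) silently omits the possibility of a fifth root of unity; this is easily dispatched (e.g. $\QQ(\zeta_5)$ is cyclic quartic while $L$ is biquadratic), or avoided altogether by arguing in $K$ as the paper does. Finally, one place where your write-up is tighter than the paper's: to rule out $u=0$ you use the real part of \eqref{eq:x_r_gamma}, which forces $x=0$; the paper cites \eqref{eq:Lehmer_sequence}, the imaginary-part equation, which on its own does not exclude $u=0$.
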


\begin{proof}
Let $\gamma=u+v\sqrt{-6}$ with $u,v\in\ZZ$. Then
$$(\alpha+\beta)^2=\frac{4u^2}{3}.$$
Since $\fp_3\fz=(\gamma)\OK$ and $\fp_3\mid \sqrt{-6}$ we conclude that $\fp_3\mid u$ and so $3\mid u$. So, $(\alpha+\beta)^2$ is a rational integer. If $(\alpha+\beta)^2=0$ then we have $u=0$. However, from \eqref{eq:Lehmer_sequence} and the fact that $n$ is odd we understand that this cannot happen. Clearly, $\alpha\beta=\gamma\bar\gamma/3$ is a non--zero rational integer.

We have to check that $(\alpha+\beta)^2$ and $\alpha\beta$ are coprime. Suppose they are not coprime. Then there exist a prime $\fq$ of $\OL$ dividing both. Then $\fq$ divides $\alpha,\beta$ and from equations \eqref{eq:x_r_gamma} and \eqref{eq:Lehmer_sequence} we understand that $\fq$ divides $(y)\OL$ and $(2r\sqrt{-2})\OL$ which contradicts the assumption that $(x,y)$ is a non--trivial primitive solution. 

Finally, we need to show that $\alpha/\beta=\gamma/\bar\gamma\in\OK$ is not a root of unity. Since the only roots of unity in $K$ are $\pm 1$ we conclude $\gamma=\pm\bar\gamma$. Then, either $v=0$ or $u=0$ which both cannot hold because of \eqref{eq:Lehmer_sequence}.
\end{proof}

From Lemma \ref{lem:lehmer} we have that the pair $(\alpha,\beta)$ is Lehmer pair and we denote by $\tu_k$ the associate Lehmer sequence. Substituting into equation~\eqref{eq:Lehmer_sequence}, we see that
\begin{equation}
\left(\frac{\alpha-\beta}{2\sqrt{-2}} \right)\left(\frac{\alpha^n - \beta^n}{\alpha-\beta}\right) = r .
\end{equation}

Hence, we get:
\begin{equation}\label{eqn:rprime}
\frac{\alpha^n - \beta^n}{\alpha-\beta}=  r /v = r^{\prime}.
\end{equation}



\begin{lem}\label{lem:B}
For a prime $q \nmid 6$, let
\[
B_q=\begin{cases}
q-1 & \text{if $\left(\frac{-6}{q}\right)=1$}\\
q+1 & \text{if $\left(\frac{-6}{q}\right)=-1$}.
\end{cases}
\]
Let
\[
B:=\max\left(
\{7\} \cup \{
B_q \; : \; \text{$q$ prime, $q\mid r^\prime$, $q \nmid 6v$}
\}
\right).
\]
Then $n\le B$.
\end{lem}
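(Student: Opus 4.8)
The plan is to feed the Primitive Divisors Theorem (Theorem~\ref{thm:non_defective}) into a rank-of-apparition computation for the prime it produces. First I would dispose of small exponents: if $n\le 7$ then $n\le 7\le B$ and there is nothing to prove, so I may assume $n>7$. As $n$ is then an odd prime exceeding $7$, Theorem~\ref{thm:non_defective} provides a primitive divisor $q$ of $\tu_n$, and by \eqref{eqn:rprime} this prime satisfies $q\mid\tu_n=r'$.

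Before computing with $q$ I would record two facts. Writing $u=3u_0$ (recall $3\mid u$ from the proof of Lemma~\ref{lem:lehmer}), one finds $(\alpha+\beta)^2=12u_0^2$ and $\alpha\beta=3u_0^2+2v^2$, so that $(\alpha^2-\beta^2)^2=-96\,u_0^2v^2$. Since a primitive divisor cannot divide $(\alpha^2-\beta^2)^2$, we get $q\neq 2,3$ and $q\nmid v$, i.e.\ $q\nmid 6v$; hence $q$ is one of the primes in the index set defining $B$, and $B_q$ is defined. Moreover, taking norms in \eqref{eq:ideal_factorization} gives $\alpha\beta=\norm(\fz)=y$, and since $q\mid r'\mid r$ while $\gcd(r,y)=1$, we conclude $q\nmid y=\alpha\beta$. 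Consequently, for a prime $\fq$ of $\OK$ above $q$, both $\gamma$ and $\bar\gamma$ are units in $(\OK/\fq)^\times$.

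The heart of the argument is then a rank-of-apparition computation. Because $q\nmid 6v$, the element $\gamma-\bar\gamma=2v\sqrt{-6}$ is a unit modulo $\fq$, so (using also $q\neq 3$) the defining formula for $\tu_n$ yields $q\mid\tu_n\iff\gamma^n\equiv\bar\gamma^n\pmod{\fq}\iff(\gamma\bar\gamma^{-1})^n\equiv 1\pmod{\fq}$. Letting $d$ be the multiplicative order of $\gamma\bar\gamma^{-1}$ in $(\OK/\fq)^\times$, this shows $d\mid n$; and $d\neq 1$, since $d=1$ would force $\fq\mid 2v\sqrt{-6}$, which is impossible as $q\nmid 6v$. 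As $n$ is prime, $d=n$. I would finish by splitting on the splitting type of $q$ in $K$: if $\left(\tfrac{-6}{q}\right)=1$ then $\OK/\fq\cong\mathbb F_q$ and $d\mid q-1=B_q$; if $\left(\tfrac{-6}{q}\right)=-1$ then $\OK/\fq\cong\mathbb F_{q^2}$ and the Frobenius $x\mapsto x^q$ realises complex conjugation, so $\gamma^q\equiv\bar\gamma$ and $(\gamma\bar\gamma^{-1})^{q+1}\equiv 1$, giving $d\mid q+1=B_q$. In either case $n=d\le B_q\le B$.

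The norm and discriminant computations are routine; the step I expect to require the most care is the inert case, where one must identify the reduction of complex conjugation with the $q$-power Frobenius in order to confine $\gamma\bar\gamma^{-1}$ to the norm-one subgroup of order $q+1$. The other delicate point is the bookkeeping of exactly which primes must be excluded, so that $\gamma$, $\bar\gamma$ and $\gamma-\bar\gamma$ are all units modulo $\fq$ and the hypotheses of Theorem~\ref{thm:non_defective} genuinely apply to the prime it delivers.
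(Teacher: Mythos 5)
Your proposal is correct and takes essentially the same approach as the paper: invoke Bilu--Hanrot--Voutier for $n>7$ to obtain a primitive divisor $q$ of $\tu_n=r'$, observe that primitivity forces $q\nmid 6v$, and bound the multiplicative order of $\gamma/\bar\gamma$ modulo a prime $\fq$ above $q$ by $B_q$ via the split/inert dichotomy for $q$ in $\QQ(\sqrt{-6})$. The only (interchangeable) difference is the endgame: the paper deduces $q\mid\tu_{B_q}$ and uses primitivity of $q$ to force $n\le B_q$, whereas you pin the order down to exactly $n$ and conclude $n\mid B_q$.
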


\begin{proof}
Recall that the exponent $n$ is an odd prime. Suppose $n>7$.
By the theorem of Bilu, Hanrot and Voutier,
$\tilde{u}_n=(\alpha^n-\beta^n)/(\alpha-\beta)=r^\prime$
is divisible by a prime $q$ that does not divide 
$(\alpha^2-\beta^2)^2=-32 u^2 v^2/3$ nor the terms
$\tilde{u}_1,\tilde{u}_2,\dotsc,\tilde{u}_{n-1}$.
Note that this is a prime $q$ dividing $r^\prime$
but not $6v$. Let $\fq$ be a prime of $K=\QQ(\sqrt{-6})$
above $q$. 
As
$(\alpha+\beta)^2$ and $\alpha\beta$ are coprime integers,
and as $\alpha$, $\beta$ satisfy \eqref{eqn:rprime} we see that
$\gamma$, $\overline{\gamma}$
are not divisible by $\fq$. We claim the multiplicative order
of the reduction of $\gamma/\overline{\gamma}$ modulo $\F_\fq$
divides $B_q$. If $-6$ is a square modulo $q$, then $\F_\fq=\F_q$
and so the multiplicative order divides $q-1=B_q$.
Otherwise, $\F_\fq=\F_{q^2}$. However, $\gamma/\overline{\gamma}$
has norm $1$, and the elements of norm $1$ in $\F_{q^2}^*$
form a subgroup of order $q+1=B_q$. Thus in either case
\[
(\gamma/\overline{\gamma})^{B_q} \equiv 1 \pmod{\fq}
\]
This implies that $q \mid \tilde{u}_{B_q}$. As $q$
is primitive divisor of $\tilde{u}_{n}$ we see that $n \le B_q$,
proving the lemma.
\end{proof}

\begin{proof}[Proof of Theorem \ref{thm:main_gcd_one}]
We notice that since $\gcd(x,r)=1$ we can immediately
deduce that $3\nmid r$.
We wrote a simple \texttt{Sage} \cite{Sage} script which for
each $1 \le r \le \bound$ such that  $3 \nmid r$, and for each $v \mid r$ computed
$B$ as in Lemma~\ref{lem:B}. For each odd prime $n \le B$
we know from equation~\eqref{eqn:thue} that $u$ is an integer solution of the polynomial equation


\begin{equation}\label{eqn:solve_fixed_rprime}
\frac{1}{2 \cdot r \cdot \sqrt{-6} \cdot 3^{(n-1)/2}}
\cdot \left( (u+v \sqrt{-6})^n-(u-v\sqrt{-6})^n\right) \; -1
\end{equation}

Computing the roots of these polynomials we are able to obtain the solutions $(|x|,y,n)$ as in Table~\ref{table:solutions_gcd_one}.
\end{proof}


\vspace*{0.5cm}

\section*{Acknowledgement}
Both authors would like to thank Professor John Cremona for providing access to the servers of the Number Theory Group of Warwick Mathematics Institute where all the computations took place.

The first author would like to thank
Institut f\"{u}r Reine Mathematik in Universit\"{a}t Ulm for the nice working
environment because a part of the work took place when he was a member of the
institute.

The second author is incredibly indebted to the University of Toronto and their mathematics department for their amazing support during such a difficult period. Special thanks have to go to Professor Dror Bar-Natan, Professor Nick Hoell, Professor Kumar Murty and Patrina Seepersaud for their kindness and support. 
Gratitude must be extended to all members of GANITA Lab who helped in one way or another: Abhishek, Anup, Debanjana, Gaurav, Payman, Ren, Samprit, Shenhui, Shuyang   and last but definitely not least, Professor Pramath Sastry.
A very special mention goes to Amit and Anja because frankly, without their help, things would have definitely been infinitely worse.
Finally, she would like to thank the members of the Number Theory community, for their generosity and well wishes, especially to Debanjana Kundu, Professor Pieter Moree , Gaurav Patil and Professor Samir Siksek, who all brought the math virtually to her when she could not physically get to it and filled her home with number theory.

\bibliographystyle{alpha}
\bibliography{squares_in_AP2}

\end{document}